\documentclass[12pt]{article}
\usepackage{amsmath,amssymb,amsthm}
\usepackage{graphicx}
\usepackage[utf8x]{inputenc}
  \newtheorem{theorem}{Theorem}
 
 \newtheorem{claim}[theorem]{Claim}
 \newtheorem{lemma}[theorem]{Lemma}
 \newtheorem{proposition}[theorem]{Proposition}
 
 {\theoremstyle{definition}

 }
\newcommand{\N}{\ensuremath{\mathbb N}} 
\newcommand{\Z}{\ensuremath{\mathbb Z}} 
\usepackage{buczo}
\usepackage{tempdwe}

\renewcommand{\lll}{\lambda}
\title{Ergodic averages with prime divisor weights in $L^{1}$}
\author{Zolt\'an Buczolich\thanks{
Research supported by National Research, Development and Innovation Office--NKFIH, Grant 104178.
\newline\indent {\it Mathematics Subject
Classification:} Primary : 37A30; Secondary : 11A25, 28D05, 37A05.
\newline\indent {\it Keywords:} Ergodic averages, pointwise convergence,
maximal inequality, number of distinct prime factors.},
Department of Analysis, E\"otv\"os Lor\'and\\
University, P\'azm\'any P\'eter S\'et\'any 1/c, 1117 Budapest, Hungary\\
email: buczo@cs.elte.hu\\
{\tt www.cs.elte.hu/\hbox{$\sim$}buczo}
}
\date{\today}
\begin{document}
\maketitle

\medskip


\begin{abstract}
We show that $\ooo(n)$ and $\OOO(n)$, 
the number of distinct prime factors of $n$ and
the number of distinct prime factors of $n$ counted according to multiplicity are good weighting functions for the pointwise ergodic theorem in $L^{1}$. That is, if $g$ denotes one of these functions
and $S_{g,K}=\sum_{n\leq K}g(n)$
then
for every ergodic dynamical system $(X,\caa,\mu,\tttt)$ and every $f\in L^{1}(X)$
$$\lim_{K\to \oo} \frac{1}{S_{g,K}}\sum_{n=1}^{K} g(n)f(\tttt^{n}x)=\int_{X}fd\mu \text{ for $\mu$ a.e. }x\in X.
$$ 

This answers a question raised by C. Cuny and M. Weber
who showed this result for $L^{p}$, $p>1$.
  \end{abstract}


\section{Introduction}
In \cite{[CW]} C. Cuny and M. Weber investigated whether some arthimetic weights
are good weights for the pointwise ergodic theorem in $L^{p}$.  
In this paper we show that the prime divisor functions $\ooo$
and $\OOO$  are both good weights for the $L^{1}$ pointwise ergodic
theorem. The same fact for the spaces $L^{p}$, $p>1$ was proved  in \cite{[CW]} and our paper answers a question raised in that paper.
Recall that if $n=p_{1}^{\aaa_{1}}\cdots p_{k}^{\aaa_{k}}$
then $\ooo(n)=k$ and $\OOO(n)=\aaa_{1}+...+\aaa_{k}$.
We  denote by $g$ one of these functions.
Given $K$ we put $$S_{g,K}=\sum_{n\leq K}g(n).$$
We suppose that $(X,\caa,\mu)$ is a measure space and
$\tttt:X\to X$ is a measure preserving ergodic transformation.
Given $f\in L^{1}(X)$ we consider the $g$-weighted ergodic averages
\begin{equation}\label{*2*a}
\cam_{g,K}  f(x)=\frac{1}{S_{g,K}}\sum_{n=1}^{K} g(n)f(\tttt^{n}x).
\end{equation}
We show that for $g=\ooo$, or $\OOO$ these averages $\mu$ a.e. converge
to $\int_{X}fd\mu$, that is $g$ is a good universal weight for the pointwise
ergodic theorem in $L^{1}$. See Theorem \ref{*thdwe*}.

For some similar ergodic theorems with other weights like the M\"obius function,
 or its absolute value, or the Liouville function we refer to the papers of
 El Abdalaoui, Kułaga-Przymus,  Lemańczyk and de la Rue, \cite{[EAKLR]}, and of Rosenblatt and Wierdl \cite{[RW]}.

\section{Preliminary results}
We recall Theorem 430 from p. 72 of \cite{[HW]}
\begin{equation}\label{HW*1}
\sum_{n\leq K}\ooo(n)=K\log\log K+B_{1}K+o(K)\text{ and }
\end{equation}
\begin{equation}\label{HW*2}
\sum_{n\leq K}\OOO(n)=K\log\log K+B_{2}K+o(K).
\end{equation}
Hence, for both cases we can assume that there exists a constant
$B$ (which depends on whether $g=\ooo$, or $g=\OOO$) such that
\begin{equation}\label{*sumg*}
\sum_{n\leq K}g(n)=K\log\log K\Big (1+\frac{B}{\log\log K}+\frac{o(K)}{K\log\log K}\Big ).
\end{equation}
From this it follows that  there exists $C_{g}>0$  such that for all $K\in\N$
\begin{equation}\label{*3*a}
\Big ( \sum_{n\leq K} g(n) \Big )^{\lf \log\log K \rf}=(S_{g,K})^{\lf \log\log K \rf}>
C_{g}(K\lf \log\log K \rf)^{\lf \log\log K \rf}.
\end{equation}

We need some information about the distribution of the functions $\ooo$ and $\OOO$. We use (3.9) from p. 689 of  \cite{[Nor]} by K. K. Norton which is based on a result of Halász \cite{[Ha]}
which is cited as (3.8) Lemma in \cite{[Nor]}.
Next we  state (3.9) from \cite{[Nor]} with $\ddd=0.1$ and $z=2-\ddd=1.9.$
\begin{proposition}\label{*lemnor*}
There exists a constant $\widetilde{C}_H$  such that  for every $K\geq 1$
\begin{equation}\label{*nor}\sum_{n\leq K} 1.9^{\ooo(n)}\leq
\sum_{n\leq K} 1.9^{\OOO(n)}\leq
\widetilde{C}_H K\exp(0.9\cdot E(K)), \text{ where }
\end{equation}
$E(K)=\sum_{p\leq K}\frac{1}{p}.$
\end{proposition}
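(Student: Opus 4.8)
The plan is to split the statement into its two inequalities. The left-hand inequality is immediate: since every integer satisfies $\ooo(n)\le\OOO(n)$ and the base $1.9>1$, we have $1.9^{\ooo(n)}\le 1.9^{\OOO(n)}$ for each $n$, and summing over $n\le K$ gives $\sum_{n\le K}1.9^{\ooo(n)}\le\sum_{n\le K}1.9^{\OOO(n)}$. Thus the entire content lies in the upper bound for the $\OOO$-sum, and it suffices to produce the constant $\widetilde{C}_H$ for that sum.

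For the upper bound I would specialize the general estimate of Norton, namely (3.9) in \cite{[Nor]}, which bounds $\sum_{n\le K}z^{\OOO(n)}$ for $z$ in a range of the form $0\le z\le 2-\ddd$. Taking $z=2-\ddd=1.9$ makes $z-1=0.9$, and Norton's estimate has exactly the shape $\sum_{n\le K}z^{\OOO(n)}\le \widetilde{C}_H\,K\exp((z-1)E(K))$, so the claimed bound $\widetilde{C}_H\,K\exp(0.9\,E(K))$ falls out directly. The restriction $z<2$ is essential and is the reason for choosing $\ddd=0.1>0$: the Dirichlet series attached to the completely multiplicative function $n\mapsto z^{\OOO(n)}$ (completely multiplicative because $\OOO$ is completely additive) has local factor $(1-z p^{-s})^{-1}$, and at the prime $p=2$ the relevant geometric series $\sum_{a\ge 0}(z/2)^{a}$ converges precisely when $z<2$.

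To justify the estimate rather than merely cite it, I would run the standard Halász--Selberg--Delange mechanism for nonnegative multiplicative functions that underlies the (3.8) Lemma of \cite{[Nor]}. The key point is a Shiu-type bound $\sum_{n\le K}f(n)\ll \frac{K}{\log K}\exp\big(\sum_{p\le K}\tfrac{f(p)}{p}\big)$; here $f(p)=1.9^{\OOO(p)}=z$, so $\sum_{p\le K}f(p)/p=z\,E(K)$. Combining this with Mertens' theorem in the form $E(K)=\log\log K+M+o(1)$, equivalently $\exp(E(K))\asymp \log K$, turns the prefactor $1/\log K$ into a constant multiple of $\exp(-E(K))$ and collapses $\exp(z\,E(K))$ to $\exp((z-1)E(K))$, all bounded multiplicative constants being absorbed into $\widetilde{C}_H$. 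The main obstacle is uniform control of the prime-power contributions: one must verify that the local series $\sum_{a\ge 0}f(p^{a})/p^{as}$ are bounded uniformly in $p$ (which again forces $z<2$ through the $p=2$ term) and that the error terms are uniform over the chosen range of $z$, so that $\widetilde{C}_H$ can be taken independent of $K$.
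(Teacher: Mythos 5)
Your proposal is correct and takes essentially the same route as the paper: the paper offers no independent proof of this proposition either, but simply records it as Norton's (3.9) (which rests on Hal\'asz's lemma, cited as (3.8) in Norton) specialized to $\delta=0.1$, $z=2-\delta=1.9$, with the first inequality being the trivial termwise consequence of $\omega(n)\le\Omega(n)$ and $1.9>1$. Your supplementary sketch of the underlying Shiu--Hal\'asz mechanism, including why the constraint $z<2$ is forced by the local factor at $p=2$ and how Mertens' theorem converts the prefactor $1/\log K$ into $\exp(-E(K))$, is sound but goes beyond what the paper itself records.
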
 

Recall that by Theorem 427 in \cite{[HW]}
\begin{equation}\label{*prth*}
E(K)=\sum_{p\leq K}\frac{1}{p}=\log\log K+B_{1}+o(1).
\end{equation}
The constant $B_{1}$ is the same which appears in \eqref{HW*1}.
The way we will use this is the following: there exists a constant
$C_{P}$  such that  for $K>3$
\begin{equation}\label{*prth2*}
E(K)=\sum_{p\leq K}\frac{1}{p}<C_{P}\log\log K.
\end{equation}
Combining this with \eqref{*nor} we obtain that for $g=\ooo$,
or $\OOO$ we have for $K>3$
\begin{equation}\label{*nor2*}
\sum_{n\leq K} 1.9^{g(n)}<\widetilde{C}_H \cdot K\cdot \exp(0.9 \cdot C_{P}\log\log K)
\leq C_{H}\cdot K\exp (0.9\cdot C_{P}\lf \log\log K \rf), 
\end{equation}
with a suitable constant $C_{H}$ not depending on $K$.

In \cite{[CW]} a result of Delange \cite{[Del]} was used to deduce Theorem 2.7 in \cite{[CW]}. The result of Delange is the following
\begin{theorem}\label{*th27CW*}
For every $m\geq 1$ we have 
$$\sum_{n\leq K} g(n)^{m}=K(\log\log K)^{m}+O(K(\log\log K)^{m-1}).$$
\end{theorem}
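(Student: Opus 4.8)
The plan is to prove this by the classical ``method of moments'' for additive functions, treating $g=\ooo$ in full and then indicating the (minor) modifications for $g=\OOO$. The point of departure is that $\ooo$ is additive and simply counts prime divisors, $\ooo(n)=\sum_{p\mid n}1$, so a power $\ooo(n)^m$ unfolds into a sum over $m$-tuples of primes dividing $n$. Rather than expand the ordinary power directly, I would first pass to falling factorials. With $S(m,j)$ the Stirling numbers of the second kind we have $x^m=\sum_{j=0}^m S(m,j)\,x^{\underline j}$, where $x^{\underline j}=x(x-1)\cdots(x-j+1)$, and the key combinatorial observation is
\begin{equation*}
\ooo(n)^{\underline j}=\sum_{\substack{p_1,\dots,p_j\mid n\\ p_1,\dots,p_j\ \text{distinct}}}1,
\end{equation*}
i.e. the $j$-th falling factorial of $\ooo(n)$ counts \emph{ordered} $j$-tuples of \emph{distinct} prime divisors of $n$. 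This reduces the whole problem to estimating $\sum_{n\le K}\ooo(n)^{\underline j}$ for each $j\le m$.

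Next I would interchange the order of summation and use $\sum_{n\le K,\ d\mid n}1=\lfloor K/d\rfloor$ with $d=p_1\cdots p_j$, obtaining
\begin{equation*}
\sum_{n\le K}\ooo(n)^{\underline j}=\sum_{\substack{p_1,\dots,p_j\le K\\ \text{distinct}}}\Big\lfloor\frac{K}{p_1\cdots p_j}\Big\rfloor .
\end{equation*}
Only tuples with $p_1\cdots p_j\le K$ contribute, and the number of these is at most $j!$ times the number of squarefree integers up to $K$, hence $O(K)$ for fixed $j$; replacing each floor by $K/(p_1\cdots p_j)$ therefore costs only $O(K)$, and it remains to evaluate $K\sum 1/(p_1\cdots p_j)$ over distinct tuples.

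The main term then comes from removing the constraint $p_1\cdots p_j\le K$ together with an inclusion--exclusion over coincidences. By Mertens' theorem in the form \eqref{*prth*}, $E(K)=\log\log K+O(1)$, so $E(K)^j=(\log\log K)^j+O((\log\log K)^{j-1})$. The unconstrained sum over all (not necessarily distinct) $j$-tuples equals $E(K)^j$; sorting tuples by the set partition of $\{1,\dots,j\}$ recording which indices use the same prime, the all-singleton partition reproduces exactly the distinct-tuple sum, while every other partition carries a factor $\sum_p p^{-r}=O(1)$ for some $r\ge 2$ and hence employs at most $j-1$ free primes, contributing $O(E(K)^{j-1})$. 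Finally the constrained and unconstrained distinct sums differ by $\sum_{\text{distinct},\ P>K}1/P$, which I would bound by isolating the largest prime factor $p_{\max}>K^{1/j}$ and using $\sum_{K^{1/j}<p\le K}1/p=O(1)$ from \eqref{*prth*}, giving a further $O(E(K)^{j-1})$. Assembling these, $\sum_{n\le K}\ooo(n)^{\underline j}=K(\log\log K)^j+O(K(\log\log K)^{j-1})$, and summing against $S(m,j)$ the term $j=m$ (with $S(m,m)=1$) yields the main term $K(\log\log K)^m$ while every other term is swallowed by the error.

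For $g=\OOO$ the identical scheme applies with $\OOO(n)=\sum_{p^k\mid n}1$ ranging over prime powers; the only change is that the relevant weights become $\sum_{p^k\le K}p^{-k}=E(K)+O(1)$, since the contribution of exponents $k\ge 2$ is $\sum_p\sum_{k\ge2}p^{-k}=O(1)$, so the prime powers merely perturb the leading behaviour by $O((\log\log K)^{j-1})$ and the conclusion is unchanged. I expect the main obstacle to be the bookkeeping of the error terms rather than any single delicate estimate: one must check simultaneously that the floor errors are $O(K)$, that the coincidence partitions and the product constraint each cost a genuine factor $(\log\log K)^{-1}$, and that all of this survives summation over $j$ against the fixed (but $m$-dependent) Stirling coefficients, so that nothing larger than $O(K(\log\log K)^{m-1})$ leaks into the remainder.
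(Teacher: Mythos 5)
A preliminary remark on the comparison you asked for: the paper contains no proof of this statement. It is Delange's theorem, quoted from \cite{[Del]}, and the author explicitly declines to use it because the implied constant in $O(K(\log\log K)^{m-1})$ depends on $m$; the whole point of Lemma \ref{*lemomax*} and the Norton--Hal\'asz bound \eqref{*nor2*} is to manufacture a substitute that behaves well when the exponent $m=\lfloor\log\log K\rfloor$ grows with $K$. So your argument can only be judged on its own merits, and your constants do depend on $m$ (through $j!$, the Stirling numbers, the number of partitions, and $\log j$), which is all the statement as written requires. Judged so, the part concerning $g=\ooo$ is correct: the falling-factorial reduction, the unfolding $\sum_{n\le K}\ooo(n)^{\underline{j}}=\sum\lfloor K/(p_1\cdots p_j)\rfloor$, the $O_j(K)$ floor error via squarefree integers, the partition analysis of coincidences, and the largest-prime-factor treatment of the constraint $p_1\cdots p_j\le K$ are all sound.

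The genuine gap is in your last paragraph, the case $g=\OOO$, where ``the identical scheme applies'' is false as stated. Distinct prime powers may share the same underlying prime, and then divisibility is governed by the least common multiple, not the product: for a tuple $(q_1,\dots,q_j)$ of distinct prime powers,
$$\#\{n\le K:\ q_i\mid n\ \text{for all } i\}=\Big\lfloor \frac{K}{\operatorname{lcm}(q_1,\dots,q_j)}\Big\rfloor,$$
which differs from $\lfloor K/(q_1\cdots q_j)\rfloor$ exactly on such tuples. This damages two of your steps. The milder one is the main term: a block of indices using the same prime $p$ contributes a kernel $p^{-\max(a,b)}$ rather than $p^{-(a+b)}$; one still gets $O(1)$ per block from $\sum_p\sum_{a<b}p^{-b}<\infty$, but this is a different estimate from the factorization into weights $\sum_{p^k\le K}p^{-k}$ that you assert. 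The serious one is the floor-error count. For $\ooo$ you could say that a distinct-prime tuple is determined, up to $j!$ orderings, by a squarefree integer $\le K$; for prime powers the map (tuple) $\mapsto$ (lcm) has large fibres: the number of \emph{sets} of distinct prime powers with lcm equal to $L$ is $2^{\OOO(L)-\ooo(L)}$ (for each prime of $L$, choose a set of exponents containing the top one), and $\sum_{L\le K}2^{\OOO(L)-\ooo(L)}$ is of order $K\log K$, which would swamp the main term $K(\log\log K)^j$. The count of tuples with lcm $\le K$ is in fact $O_j(K)$, but to see this you must play the fixed tuple length $j$ against these fibres: the maximal prime power of each occurring prime determines an integer $\le K$ injectively, and each sub-maximal entry costs a convergent factor such as $\sum_p\sum_{e\ge 2}e^{j}p^{-e}<\infty$. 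Alternatively, a cleaner repair is to deduce the $\OOO$ case from the $\ooo$ case you have already proved: write $\OOO=\ooo+h$ with $h=\OOO-\ooo\ge 0$; since $1.9^{h}$ is multiplicative with value $1.9^{k-1}$ at $p^k$, splitting $n$ into its squarefull and squarefree parts gives $\sum_{n\le K}1.9^{h(n)}=O(K)$, hence $\sum_{n\le K}h(n)^{i}=O_i(K)$ for each fixed $i$; then expand $(\ooo+h)^m$ binomially and bound each cross term by H\"older, $\sum_{n\le K}\ooo(n)^{m-i}h(n)^{i}\le\big(\sum_{n\le K}\ooo(n)^{m}\big)^{(m-i)/m}\big(\sum_{n\le K}h(n)^{m}\big)^{i/m}=O\big(K(\log\log K)^{m-i}\big)$, which for $i\ge 1$ is absorbed by the error term.
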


We were unable to use this result since  the constant in
$O(K(\log\log K)^{m-1})$  cannot be chosen not depending on  $m\geq 1$.

Hence we use \eqref{*nor2*} in the proof of the following lemma.
\begin{lemma}\label{*lemomax*}
There exists a constant $C_{\OOO,max} $  such that for all $K\geq 16$
\begin{equation}\label{*dwe7*a}
\sum_{n\leq K}\ooo(n)^{\lf \log\log K \rf}\leq
\sum_{n\leq K}\OOO(n)^{\lf \log\log K \rf}<  K(C_{\OOO,max} \lf \log\log K \rf)^{\lf \log\log K \rf}.
\end{equation}
\end{lemma}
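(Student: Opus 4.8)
The first inequality requires no work: every prime factor counted by $\ooo$ is also counted by $\OOO$, so $\ooo(n)\le\OOO(n)$ for all $n$, and raising to the power $\lf\log\log K\rf$ preserves this termwise. Thus the whole content is the second inequality. My plan is to dominate the \emph{polynomial} moment $\sum_{n\le K}\OOO(n)^{m}$, where I abbreviate $m=\lf\log\log K\rf$, by the \emph{exponential} moment $\sum_{n\le K}1.9^{\OOO(n)}$ that has already been estimated in \eqref{*nor2*}. The reason for routing through \eqref{*nor2*} rather than Theorem \ref{*th27CW*} is exactly the remark made after that theorem: the error constant in the Delange estimate depends on $m$, whereas the constants $C_{H},C_{P}$ in \eqref{*nor2*} do not — and this matters because here $m=\lf\log\log K\rf\to\oo$ with $K$.

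The elementary device is the pointwise inequality, valid for every real $x\ge 0$ and every integer $m\ge 1$ (all logarithms natural),
\begin{equation*}
x^{m}\le\Big(\frac{m}{e\log 1.9}\Big)^{m}1.9^{x}.
\end{equation*}
This is seen by maximizing $x\mapsto x^{m}1.9^{-x}=x^{m}e^{-x\log 1.9}$ over $x\ge0$: differentiation locates the maximum at $x=m/\log 1.9$, where the value is $(m/(e\log 1.9))^{m}$. Applying this with $x=\OOO(n)$, summing over $n\le K$, and inserting \eqref{*nor2*}, I obtain for $K\ge 16$ (so that $m=\lf\log\log K\rf\ge1$)
\begin{equation*}
\sum_{n\le K}\OOO(n)^{m}\le\Big(\frac{m}{e\log 1.9}\Big)^{m}\sum_{n\le K}1.9^{\OOO(n)}<C_{H}\,K\,\Big(\frac{m}{e\log 1.9}\Big)^{m}\exp(0.9\,C_{P}\,m).
\end{equation*}

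Writing $\exp(0.9C_{P}m)=(e^{0.9C_{P}})^{m}$ and collecting the $m$-th powers puts the right-hand side into the shape $C_{H}K(c\,m)^{m}$ with $c=e^{0.9C_{P}}/(e\log 1.9)$ a fixed constant. It then only remains to absorb the leading factor $C_{H}$ into the base: since $m\ge1$, choosing $C_{\OOO,max}=c\cdot\max(C_{H},1)$ gives $(C_{\OOO,max}m)^{m}=\max(C_{H},1)^{m}(cm)^{m}\ge C_{H}(cm)^{m}$, which is precisely the asserted bound $K(C_{\OOO,max}\lf\log\log K\rf)^{\lf\log\log K\rf}$. I do not expect a genuine obstacle here; the one point that must be checked, and the whole reason the scheme succeeds, is that $c$ and hence $C_{\OOO,max}$ are independent of $K$. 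This uniformity is inherited directly from the $K$-independence of $C_{H}$ and $C_{P}$ in \eqref{*nor2*}. The choice of comparison base is immaterial in principle — any fixed base larger than $1$ works and merely changes $c$ — but taking $1.9<2$ is what makes the estimate \eqref{*nor} (and thus \eqref{*nor2*}) available.
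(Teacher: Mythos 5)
Your proof is correct, and it reaches the bound by a genuinely different (and shorter) route than the paper. Both arguments hinge on the same key input, the uniform exponential-moment bound \eqref{*nor2*}, and both exploit the fact that with $m=\lf\log\log K\rf$ the factor $\exp(0.9\,C_{P}\,m)$ can be absorbed as $c^{m}$ into the base of $(cm)^{m}$; the difference lies in how the polynomial moment is compared with the exponential moment. The paper discretizes: it partitions $\{n\le K\}$ into dyadic level sets $N_{l,K}=\{n\le K:\,2^{l}\lf\log\log K\rf\le\OOO(n)<2^{l+1}\lf\log\log K\rf\}$, bounds each cardinality by a Markov-type estimate derived from \eqref{*nor2*}, and then sums the contributions $N_{l,K}(2^{l+1}\lf\log\log K\rf)^{\lf\log\log K\rf}$, choosing a threshold $l_{0}$ so that the tail of the series decays like $\exp(-2^{l-1})$. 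You instead use the pointwise ``Chernoff'' inequality $x^{m}\le(m/(e\log 1.9))^{m}\,1.9^{x}$ (valid for all $x\ge 0$ and integers $m\ge 1$, by maximizing $x^{m}e^{-x\log 1.9}$), which collapses all of that level-set bookkeeping into a single line and yields the explicit constant $C_{\OOO,max}=\max(C_{H},1)\,e^{0.9C_{P}}/(e\log 1.9)$. Your final absorption of $C_{H}$ into the base, using $m\ge 1$ (guaranteed by $K\ge 16$), is the same device the paper uses at the end of its proof. The two arguments are equally rigorous and rest on identical number-theoretic input; yours is cleaner and produces an explicit constant, while the paper's decomposition buys nothing extra here---it is essentially a discretized form of the same moment comparison.
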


We remark that the assumption $K\geq 16$ implies that
$\log\log K>1.01>1.$

\begin{proof}
Since $\ooo(n)\leq \OOO(n) $ the first  inequality is obvious in \eqref{*dwe7*a},

We assume that $K\geq 16$ is fixed and for ease of notation we put 
$\nnn=\lf \log\log K \rf.$
Set
\begin{equation}\label{*dwe7*b}
N_{l,K}=\{ n\leq K:2^{l}\nu \leq \OOO(n)<2^{l+1}\nnn \}.
\end{equation}
By \eqref{*nor2*}
$N_{l,K}\cdot 1.9^{2^{l}\nnn}<C_{H}K\exp(0.9\cdot C_{P}\nnn).$
This implies that
\begin{equation}\label{*dwe8*b}
N_{l,K}<C_{H}K\cdot \exp((0.9 \cdot C_{P}-2^{l}\log 1.9)\nnn).
\end{equation}
Since $\log 1.9>0.6$ we can choose $l_{0}$  such that for
$l\geq l_{0}$
\begin{equation}\label{*dwe9*a}
0.9\cdot C_{P}-2^{l}\log 1.9+(l+1)\log 2
<-0.5\cdot 2^{l}=-2^{l-1}.
\end{equation}
From \eqref{*dwe8*b} and \eqref{*dwe9*a} we infer
\begin{equation}\label{*dwe8*a}
\sum_{n\leq K} \OOO(n)^{\nnn}<
K\cdot (2\nnn)^{\nnn}+\sum_{l=1}^{\oo}
N_{l,K}(2^{l+1}\nnn)^{\nnn}\leq
\end{equation}
$$K\cdot (2\nnn)^{\nnn}+\sum_{l=1}^{l_{0}-1}K(2^{l+1}\nnn)^{\nnn}+\sum_{l=l_{0}}^{\oo}C_{H}K \nnn^{\nnn}\exp(((\log 2^{l+1})+0.9 C_{P}-2^{l}\log 1.9)\nnn)<$$
(using \eqref{*dwe9*a} with a suitable constant $C_{\OOO,1}>2$ we obtain)
$$C_{\OOO,1}^{\nnn}K\nnn^{\nnn}+ \sum_{l=l_{0}}^{\oo}C_{H}K\nnn^{\nnn}\exp(-2^{l-1}\nnn)<$$
(recalling that $\nnn=\lf \log\log K \rf\geq \lf \log\log 16 \rf=1$, with a suitable constant $C_{\OOO,max}$ we have)
$$K\nnn^{\nnn}\Big (C_{\OOO,1}^{\nnn}+C_{H}\sum_{l=l_{0}}^{\oo}\exp({-2^{l-1}})\Big )< C_{\OOO,max}^{\nnn}  K\nnn^{\nnn}=$$
$$ K(C_{\OOO,max}  \lf \log\log K \rf)^{\lf \log\log K \rf}.$$
\end{proof}

We need the following (probably well-known) elementary inequality
to which we could not find a reference and hence provided the short proof.
\begin{lemma}\label{*lemperm*}
Suppose $K,\nnn\in\N$, $b_{1},...,b_{K}$ are nonnegative numbers
and we have permutations
$\pi_{j}:\{ 1,...,K \}\to \{ 1,...,K \},$ $j=1,...,\nnn$. Then
\begin{equation}\label{*dwe11b*a}
b_{\pi_{1}(1)}\cdots b_{\pi_{\nnn}(1)}+
...+
b_{\pi_{1}(K)}\cdots b_{\pi_{\nnn}(K)}\leq
b_{1}^{\nnn}+...+b_{K}^{\nnn}.
\end{equation}
\end{lemma}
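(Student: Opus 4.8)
The plan is to prove \eqref{*dwe11b*a} by applying the arithmetic–geometric mean inequality to each of the $K$ summands on the left and then exploiting the fact that each $\pi_{j}$ is a bijection. Fix $k\in\{1,\dots,K\}$ and consider the single product $b_{\pi_{1}(k)}\cdots b_{\pi_{\nnn}(k)}$. I would apply AM–GM not to the numbers $b_{\pi_{j}(k)}$ themselves but to their $\nnn$-th powers: setting $a_{j}=b_{\pi_{j}(k)}^{\nnn}$ (these are nonnegative, so AM–GM applies), the geometric mean satisfies $(a_{1}\cdots a_{\nnn})^{1/\nnn}\leq \tfrac{1}{\nnn}(a_{1}+\dots+a_{\nnn})$. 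Since $(b_{\pi_{j}(k)}^{\nnn})^{1/\nnn}=b_{\pi_{j}(k)}$, the left-hand side of this is exactly the product $b_{\pi_{1}(k)}\cdots b_{\pi_{\nnn}(k)}$, so I obtain the termwise bound
$$
b_{\pi_{1}(k)}\cdots b_{\pi_{\nnn}(k)}\leq \frac{1}{\nnn}\sum_{j=1}^{\nnn}b_{\pi_{j}(k)}^{\nnn}.
$$

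Next I would sum this inequality over $k=1,\dots,K$ and interchange the two finite sums on the right, writing
$$
\sum_{k=1}^{K}b_{\pi_{1}(k)}\cdots b_{\pi_{\nnn}(k)}\leq \frac{1}{\nnn}\sum_{j=1}^{\nnn}\Big(\sum_{k=1}^{K}b_{\pi_{j}(k)}^{\nnn}\Big).
$$
The key observation is that for each fixed $j$ the map $\pi_{j}$ is a permutation of $\{1,\dots,K\}$, so $\{\pi_{j}(k):k=1,\dots,K\}=\{1,\dots,K\}$ and therefore the inner sum is permutation-invariant: $\sum_{k=1}^{K}b_{\pi_{j}(k)}^{\nnn}=\sum_{i=1}^{K}b_{i}^{\nnn}$, a quantity that does not depend on $j$. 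Substituting this into the displayed bound gives $\tfrac{1}{\nnn}\cdot\nnn\cdot\sum_{i=1}^{K}b_{i}^{\nnn}=\sum_{i=1}^{K}b_{i}^{\nnn}$, which is precisely the right-hand side of \eqref{*dwe11b*a}.

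I do not expect a genuine obstacle here; the whole content of the lemma is the pairing of two standard facts. The only step that requires a moment of care is the choice to apply AM–GM to the $\nnn$-th powers $b_{\pi_{j}(k)}^{\nnn}$ rather than to the $b_{\pi_{j}(k)}$ directly, since this is exactly what produces the $\nnn$-th powers needed on the right-hand side and makes the geometric mean collapse to the desired product. After that, the bijectivity of each $\pi_{j}$ does all the remaining work, and the factor $\tfrac1\nnn$ cancels against the $\nnn$ identical inner sums. The hypothesis that the $b_{i}$ are nonnegative is used only to legitimize the AM–GM step.
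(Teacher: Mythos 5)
Your proof is correct, but it takes a genuinely different route from the paper's. You combine two standard facts: the AM--GM inequality applied to the $\nnn$-th powers $b_{\pi_{j}(k)}^{\nnn}$, which yields the termwise bound
\begin{equation*}
b_{\pi_{1}(k)}\cdots b_{\pi_{\nnn}(k)}\leq \frac{1}{\nnn}\sum_{j=1}^{\nnn}b_{\pi_{j}(k)}^{\nnn},
\end{equation*}
and the bijectivity of each $\pi_{j}$, which (after summing over $k$ and interchanging the two finite sums) makes each of the $\nnn$ inner sums equal to $\sum_{i=1}^{K}b_{i}^{\nnn}$, so the factor $\frac{1}{\nnn}$ cancels. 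The paper instead runs a combinatorial exchange argument: starting from the elementary observation \eqref{*dwe11b*b} that $A>B\geq 0$ and $C>D\geq 0$ imply $AC+BD\geq AD+BC$, it iteratively swaps two values of one of the permutations so that the total sum never decreases and the largest product strictly increases, terminating in finitely many steps with the products sorted into the pure powers $b_{K}^{\nnn},b_{K-1}^{\nnn},\dots$, whose sum is the right-hand side. Your argument is shorter and cleaner, sidestepping the bookkeeping of the swapping procedure, whose later stages the paper only sketches (``repeating this procedure''); the paper's argument is more self-contained, using nothing beyond the expansion of $(A-B)(C-D)\geq 0$, and it additionally shows that the bound is attained exactly when all the permutations coincide. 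Both proofs are valid, and both use nonnegativity in an essential way (yours to justify AM--GM and the $\nnn$-th root, the paper's to justify the sign of the exchange step).
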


\begin{proof}
Without limiting generality we can suppose that $0\leq b_{1}\leq ... \leq b_{K}$.
First observe that if $A> B\geq 0$ and $C> D\geq 0$ then
\begin{equation}\label{*dwe11b*b}
\text{
 from
$(A-B)(C-D)\geq 0$ it follows that $AC+BD\geq AD+BC.$
}
\end{equation}
Set $\pi_{j,1}(k)=\pi_{j}(k)$ for $j=1,...,\nnn$ and $k=1,...,K$.
If $\pi_{j,l}$ is defined for an $l\in\N$ then set
$${\mathfrak M}_{l}^{*}=\max_{k}b_{\pi_{1,l}(k)}\cdots b_{\pi_{\nnn,l}(k)}.$$
We want to define a sequence of permutations such that for every $l$
\begin{equation}\label{*dwe11c*a}
b_{\pi_{1,l-1}(1)}\cdots b_{\pi_{\nnn,l-1}(1)}
+...+
b_{\pi_{1,l-1}(K)}\cdots b_{\pi_{\nnn,l-1}(K)}
\leq
\end{equation}
$$b_{\pi_{1,l}(1)}\cdots b_{\pi_{\nnn,l}(1)}
+...+
b_{\pi_{1,l}(K)}\cdots b_{\pi_{\nnn,l}(K)}.$$
Suppose that ${\mathfrak M}_{l}^{*}<b_{K}^{\nnn}.$
Select $k^{*}$
 such that  ${\mathfrak M}_{l}^{*}=b_{\pi_{1,l}(k^{*})}\cdots b_{\pi_{\nnn,l}(k^{*})}$.
 Then we can select $j^{*}$  such that $b_{\pi_{j^*,l}(k^{*})}<b_{K}$
 and $k^{**}$  such that $b_{\pi_{j^*,l}(k^{**})}=b_{K}.$
 Set $A=b_{K}=b_{\pi_{j^*,l}(k^{**})}$, $B=b_{\pi_{j^*,l}(k^{*})}$,
 $C=b_{\pi_{1,l}(k^{*})}\cdots b_{\pi_{\nnn,l}(k^{*})}/B=
 {\mathfrak M}_{l}^{*}/B$
 and
 $D=b_{\pi_{1,l}(k^{**})}\cdots b_{\pi_{\nnn,l}(k^{**})}/A$.
 Then $A>B\geq 0$ and $C> D\geq 0.$
 Set $\pi_{j^{*},l+1}(k^{**})=\pi_{j^{*},l}(k^{*})$,
 $\pi_{j^{*},l+1}(k^{*})=\pi_{j^{*},l}(k^{**})$, and for any other
 $j$ and $k$
set $\pi_{j,l+1}(k)=\pi_{j,l}(k)$. From \eqref{*dwe11b*b} it follows that
\eqref{*dwe11c*a} holds with $l$ replaced by $l+1$ and
${\mathfrak M}^{*}_{l+1}>{\mathfrak M}^{*}_{l}$. Hence in finitely many steps there is $l_{1}$
 such that ${\mathfrak M}_{l_{1}}^{*}=b_{K}^{\nnn}$.

After step $l_{1}$ arguing as above we can still define the permutations $\pi_{j,l}$
so that \eqref{*dwe11c*a} holds at each step and can reach a step $l_{2}$
 such that ${\mathfrak M}_{l_{2}}^{*}=b_{K}^{\nnn}$ and the second largest
 term among $b_{\pi_{1,l_{2}}(k)}\cdots b_{\pi_{\nnn,l_{2}}(k)}$,
 $k=1,...,K$ equals $b_{K-1}^{\nnn}.$
Repeating this procedure one can obtain \eqref{*dwe11b*a}. 
\end{proof}

We will use the transference principle and hence we need to consider
functions on the integers. Suppose $\fff:\Z\to [0,+\oo)$
is a function on the integers with compact/bounded support.
Again $g$ will denote $\ooo$, or $\OOO$.
Put
$$M_{g,K} \fff(j)=\frac{1}{S_{g,K}}\sum_{n=1}^{K} g(n)\fff(j+n)\text{ for }j\in\Z.$$

First we prove a ``localized" maximal inequality.

\begin{lemma}\label{*lemlocmax*}
There exists a constant $C_{g,max}>0$  such that for any $\fff:\Z\to [0,+\oo)$,
$K\geq 16$ and $k\in\Z$
\begin{equation}\label{*dwe12*a}
\sum_{j=1}^{K}(M_{g,K} \fff(k+j))^{\lf \log\log K \rf}\leq
\Big (\sum_{j=2}^{2K}\fff(k+j)\Big )
\Big ( \frac{C_{g,max}}{K}\sum_{j=2}^{2K} \fff(k+j) \Big )^{\lf \log\log K \rf -1}.
\end{equation}
\end{lemma}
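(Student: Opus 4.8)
The plan is to fix $K\ge 16$ and $k\in\Z$, set $\nnn=\lf \log\log K \rf$, $a_{m}=\fff(k+m)$ and $T=\sum_{j=2}^{2K}\fff(k+j)=\sum_{m=2}^{2K}a_{m}$, and observe that only $a_{2},\dots,a_{2K}$ enter the averages, since $M_{g,K}\fff(k+j)=\tfrac{1}{S_{g,K}}\sum_{n=1}^{K}g(n)a_{j+n}$ with $j+n\in\{2,\dots,2K\}$ whenever $1\le j,n\le K$. Clearing the denominator, it then suffices to bound $\sum_{j=1}^{K}\big(\sum_{n=1}^{K}g(n)a_{j+n}\big)^{\nnn}$ by $(S_{g,K})^{\nnn}\,T\,(C_{g,max}T/K)^{\nnn-1}$, which is the right-hand side of \eqref{*dwe12*a} multiplied by $(S_{g,K})^{\nnn}$.

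The key step is to apply H\"older's inequality with exponents $\nnn$ and $\nnn/(\nnn-1)$ to the inner weighted sum, writing $g(n)a_{j+n}=\big(g(n)a_{j+n}^{1/\nnn}\big)\cdot a_{j+n}^{(\nnn-1)/\nnn}$, which gives for each $j$
$$\Big(\sum_{n=1}^{K}g(n)a_{j+n}\Big)^{\nnn}\le \Big(\sum_{n=1}^{K}g(n)^{\nnn}a_{j+n}\Big)\Big(\sum_{n=1}^{K}a_{j+n}\Big)^{\nnn-1}.$$
This particular split is what preserves the gain: a plain convexity (Jensen) estimate would bound the average by $\sum_{m}a_{m}^{\nnn}$ and lose the factor $K^{-(\nnn-1)}$ that the statement demands. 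Since $\{j+1,\dots,j+K\}\subseteq\{2,\dots,2K\}$, each window sum satisfies $\sum_{n=1}^{K}a_{j+n}\le T$, so the second factor is at most $T^{\nnn-1}$. Summing over $j$, interchanging the order of summation, and using $\sum_{j=1}^{K}a_{j+n}\le T$ once more yields
$$\sum_{j=1}^{K}\Big(\sum_{n=1}^{K}g(n)a_{j+n}\Big)^{\nnn}\le T^{\nnn-1}\sum_{n=1}^{K}g(n)^{\nnn}\sum_{j=1}^{K}a_{j+n}\le T^{\nnn}\sum_{n=1}^{K}g(n)^{\nnn}.$$

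It remains to insert the moment bound and normalize. For $\nnn\ge 2$ I would use Lemma~\ref{*lemomax*}, $\sum_{n\le K}g(n)^{\nnn}\le K(C_{\OOO,max}\nnn)^{\nnn}$, divide by $(S_{g,K})^{\nnn}$, and apply \eqref{*3*a} in the form $(S_{g,K})^{\nnn}>C_{g}(K\nnn)^{\nnn}$ to cancel the factor $(K\nnn)^{\nnn}$, which leaves
$$\sum_{j=1}^{K}\big(M_{g,K}\fff(k+j)\big)^{\nnn}<\frac{T^{\nnn}\,C_{\OOO,max}^{\nnn}}{C_{g}\,K^{\nnn-1}}=T\Big(\frac{C_{\OOO,max}^{\nnn/(\nnn-1)}}{C_{g}^{1/(\nnn-1)}}\cdot\frac{T}{K}\Big)^{\nnn-1}.$$
Choosing $C_{g,max}$ so large that $C_{\OOO,max}^{\nnn}\le C_{g}\,C_{g,max}^{\nnn-1}$ for every $\nnn\ge 2$ (for instance $C_{g,max}=C_{\OOO,max}^{2}\max(1,1/C_{g})$ works, since $2(\nnn-1)\ge\nnn$ and $C_{\OOO,max}>1$) then delivers \eqref{*dwe12*a}. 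The case $\nnn=1$ is handled directly and is in fact exact: using the identity $\sum_{n\le K}g(n)=S_{g,K}$ in place of Lemma~\ref{*lemomax*}, the chain above collapses to $\sum_{j=1}^{K}M_{g,K}\fff(k+j)\le T$, which is \eqref{*dwe12*a} at $\nnn=1$.

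The main obstacle is arranging the H\"older split correctly: one must distribute the $\nnn$-th power so that exactly $g(n)^{\nnn}$ is isolated (so that Lemma~\ref{*lemomax*} can be applied) while the leftover mass collapses into window sums bounded by $T$, thereby manufacturing the crucial factor $K^{-(\nnn-1)}$. I note that a purely combinatorial route is also available: one can expand the power and bound each term $\sum_{j}a_{j+n_{1}}\cdots a_{j+n_{\nnn}}$ by $\sum_{m}a_{m}^{\nnn}$ via Lemma~\ref{*lemperm*}, after extending the shift injections $j\mapsto j+n_{i}$ to permutations of $\{2,\dots,2K\}$; but this discards the averaging and only gives the far weaker $\sum_{j}\big(M_{g,K}\fff(k+j)\big)^{\nnn}\le\sum_{m}a_{m}^{\nnn}$, so I would not take that path here. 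The rest is bookkeeping: checking the constant inequality uniformly in $\nnn$ and isolating the degenerate endpoint $\nnn=1$.
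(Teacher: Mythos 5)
Your proof is correct, and it reaches the conclusion by a genuinely different route at the central step. Where you apply H\"older's inequality with exponents $\nnn$ and $\nnn/(\nnn-1)$ to get $\big(\sum_{n\le K}g(n)a_{j+n}\big)^{\nnn}\le\big(\sum_{n\le K}g(n)^{\nnn}a_{j+n}\big)\big(\sum_{n\le K}a_{j+n}\big)^{\nnn-1}$, the paper instead expands the $\nnn$-th power completely, changes variables so the quantity becomes $\sum_{j_1,\dots,j_{\nnn}}\fff(j_1)\cdots\fff(j_{\nnn})\sum_{n'}\tg(n')\tg(n'+j_2-j_1)\cdots\tg(n'+j_{\nnn}-j_1)$, and bounds the inner sum by $\sum_{n\le K}g(n)^{\nnn}$ via the rearrangement-type Lemma~\ref{*lemperm*}. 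Both routes arrive at exactly the same intermediate bound $T^{\nnn}\sum_{n\le K}g(n)^{\nnn}/S_{g,K}^{\nnn}$ and then finish identically with Lemma~\ref{*lemomax*} and \eqref{*3*a}; your route is shorter and makes Lemma~\ref{*lemperm*} (and its separate proof) unnecessary, while the paper's is more elementary and self-contained. Two further remarks. First, your closing comment misreads the combinatorial alternative: the paper does not apply Lemma~\ref{*lemperm*} to the products of $\fff$-values (which, as you note, would forfeit the gain in $K$), but to the shifted weight factors $\tg$, keeping $\fff(j_1)\cdots\fff(j_{\nnn})$ intact---so that route does not discard the averaging and is exactly as strong as your H\"older step. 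Second, your separate, exact treatment of the endpoint $\nnn=1$ (which does occur, namely for $16\le K<e^{e^{2}}$) is warranted and is a point of extra care: your constant bookkeeping $C_{\OOO,max}^{\nnn}\le C_{g}C_{g,max}^{\nnn-1}$ only makes sense for $\nnn\ge 2$, and indeed the paper's own final absorption of constants would require $C_{\OOO,max}\le C_{g}$ when $\nnn=1$, so isolating that case is the right thing to do.
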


\begin{proof}
Without limiting generality we can suppose that $k=0$ and $K\geq 16$
is fixed.
We use again the notation $\nnn=\nnn_{K}=\lf \log\log K \rf$.
We put
\begin{equation}\label{*dwe13*a}
\tg(n)=\tg_{K}(n)=\twocase {g(n)}{\text{if $1\leq n\leq K$}}
{0}{\text{otherwise.}}
\end{equation}
We need to estimate
$$\sum_{j=1}^{K}\Big (\frac{1}{S_{g,K}} \sum_{n=1}^{K} g(n)\fff(j+n)\Big )^{\nnn}=$$
$$\frac{1}{S_{g,K}^{\nnn}}\sum_{j=1}^{K}\sum_{n_{1}=1}^{K}...
\sum_{n_{\nnn}=1}^{K}g(n_{1})\cdots g(n_{\nnn})\cdot 
\fff(j+n_{1})\cdots \fff(j+n_{\nnn})=$$
$$
\frac{1}{S_{g,K}^{\nnn}}\sum_{n'=1}^{K}\sum_{j_{1}=2}^{2K}...
\sum_{j_{\nnn}=2}^{2K}\fff(j_{1})\cdots \fff(j_{\nnn})\cdot 
\tg(n')\tg(n'+j_{2}-j_{1})\cdots \tg(n'+j_{\nnn}-j_{1})=
$$
$$
\frac{1}{S_{g,K}^{\nnn}}\sum_{j_{1}=2}^{2K}...
\sum_{j_{\nnn}=2}^{2K}\fff(j_{1})\cdots \fff(j_{\nnn})
\cdot \sum_{n'=1}^{K} 
\tg(n')\tg(n'+j_{2}-j_{1})\cdots \tg(n'+j_{\nnn}-j_{1})\leq
$$
(using Lemma \ref{*lemperm*} and \eqref{*dwe13*a})
$$
\frac{1}{S_{g,K}^{\nnn}}\sum_{j_{1}=2}^{2K}...
\sum_{j_{\nnn}=2}^{2K}\fff(j_{1})\cdots \fff(j_{\nnn})
\cdot \sum_{n'=-K+2}^{2K-1} 
(\tg(n'))^{\nnn}=
$$
$$
\frac{1}{S_{g,K}^{\nnn}}\sum_{j_{1}=2}^{2K}...
\sum_{j_{\nnn}=2}^{2K}\fff(j_{1})\cdots \fff(j_{\nnn})
\cdot \sum_{n'=1}^{K} 
(g(n'))^{\nnn}\leq
$$
(by using Lemma \ref{*lemomax*})
$$
K\cdot C_{\OOO,max}^{\nnn}\nnn^{\nnn} \frac{1}{S_{g,K}^{\nnn}}
\Big (\sum_{j=2}^{2K} \fff(j)\Big )^{\nnn}<
$$
(by \eqref{*3*a})
$$
K\cdot C_{\OOO,max}^{\nnn}  \nnn^{\nnn} \frac{1}{C_{g}(K\nnn)^{\nnn}}
\Big (\sum_{j=2}^{2K} \fff(j)\Big )^{\nnn}<
$$
(with a suitable constant $C_{g,max}>0$)
$$<
\Big (\sum_{j=2}^{2K} \fff(j)\Big )
\cdot 
\Big (C_{g,max}\frac{1}{K}\sum_{j=2}^{2K} \fff(j)\Big )^{\nnn-1}.
$$
\end{proof}


\section{Main result}

\begin{theorem}\label{*thdwe*}
For every ergodic dynamical system $(X,\caa,\mu,\tttt)$ and every $f\in L^{1}(X)$
\begin{equation}\label{*dwe2a*b}
\lim_{K\to \oo}\cam_{g,K} f(x)=\int_{X}fd\mu \text{ for $\mu$ a.e. }x\in X.
\end{equation}
\end{theorem}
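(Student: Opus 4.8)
The plan is to derive \eqref{*dwe2a*b} from two ingredients together with the Banach principle: a maximal inequality for the averages $\cam_{g,K}$ on $X$ valid for all of $L^{1}$, and the a.e.\ convergence of $\cam_{g,K}f$ on a dense subclass. Since $L^{\oo}(X)\subset L^{2}(X)$ and $\cam_{g,K}f\to\int_{X}f\,d\mu$ a.e.\ for $f\in L^{2}$ by the $L^{p}$, $p>1$, result of Cuny and Weber \cite{[CW]}, and since $L^{\oo}(X)$ is dense in $L^{1}(X)$, the convergence on a dense set comes for free. Because $g\ge 0$ we have $|\cam_{g,K}f|\le\cam_{g,K}|f|$, so it suffices to control the maximal operator $\sup_{K\ge 16}\cam_{g,K}\varphi$ for $\varphi\ge 0$.

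First I would turn the single--scale estimate of Lemma~\ref{*lemlocmax*} into a maximal inequality on $\Z$. Writing $\nnn_{i}=\lf\log\log 2^{i}\rf$ and $K_{i}=2^{i}$, tiling $\Z$ by consecutive blocks of length $K_{i}$, summing \eqref{*dwe12*a} over the tiling and bounding each window average by $\|\fff\|_{1}/K_{i}$ (here $\|\fff\|_{1}=\sum_{n\in\Z}\fff(n)$) gives the global bound $\sum_{j\in\Z}(M_{g,K_{i}}\fff(j))^{\nnn_{i}}\le 2\|\fff\|_{1}\big(C_{g,max}\|\fff\|_{1}/K_{i}\big)^{\nnn_{i}-1}$. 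Markov's inequality then yields, for every $\lambda>0$,
$$\big|\{\,j\in\Z:M_{g,K_{i}}\fff(j)>\lambda\,\}\big|\le \frac{2\|\fff\|_{1}}{\lambda}\Big(\frac{C_{g,max}\|\fff\|_{1}}{K_{i}\lambda}\Big)^{\nnn_{i}-1}.$$
The decisive point is that $\nnn_{i}=\lf\log\log 2^{i}\rf\to\oo$ while the base $C_{g,max}\|\fff\|_{1}/(K_{i}\lambda)$ decays geometrically in $i$: summing over $i$, the large--scale terms are dominated by a convergent geometric series, and the finitely many small--scale terms by the trivial bound $|\{M_{g,K_{i}}\fff>\lambda\}|\le\|\fff\|_{1}/\lambda$. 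This produces a maximal inequality $|\{\sup_{i}M_{g,K_{i}}\fff>\lambda\}|\le\Psi(\|\fff\|_{1}/\lambda)$ with $\Psi(t)\to 0$ as $t\to 0$; in particular the dyadic maximal operator is continuous in measure at $0$ on $\ell^{1}(\Z)$. This step is the heart of the matter and the point where $L^{1}$ genuinely differs from $L^{p}$, $p>1$: the weights $g(n)$ are unbounded, so no elementary weak type estimate is available, and it is precisely the slow growth of the exponent $\nnn=\lf\log\log K\rf$ supplied by Lemma~\ref{*lemlocmax*} that makes the sum over scales converge.

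To pass from the lacunary scales $K_{i}$ to all $K$, I would use that $g\ge 0$ and that $S_{g,K}$ is nondecreasing: for $K_{i}\le K<K_{i+1}$ one has $M_{g,K}\fff\le (S_{g,K_{i+1}}/S_{g,K_{i}})\,M_{g,K_{i+1}}\fff$, and \eqref{*sumg*} gives $S_{g,K_{i+1}}/S_{g,K_{i}}\to 2$, so this ratio is bounded; hence $\sup_{K\ge 16}M_{g,K}\fff\le C\sup_{i}M_{g,K_{i}}\fff$ and the full discrete maximal operator inherits the same inequality. I would then transfer this to $X$ by Calderón's principle, applying the discrete estimate to $\fff_{x}(n)=|f|(\tttt^{n}x)$ on long orbit segments and integrating against the invariant measure $\mu$; letting the length of the segments tend to infinity shows that $f\mapsto\sup_{K\ge 16}\cam_{g,K}|f|$ is continuous in measure at $0$ on $L^{1}(X)$. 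With this maximal inequality and the dense convergence above, the Banach principle shows that the set of $f\in L^{1}$ for which $\cam_{g,K}f$ converges a.e.\ is closed and contains a dense set, hence equals $L^{1}$; since $\cam_{g,K}\mathbf{1}=1$ and the limit equals $\int_{X}f\,d\mu$ on the dense class, it equals $\int_{X}f\,d\mu$ throughout, proving \eqref{*dwe2a*b}. The main obstacle, as noted, is extracting a usable maximal inequality from the unbounded weights, which the growing exponent in Lemma~\ref{*lemlocmax*} is tailored to overcome.
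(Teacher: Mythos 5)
Your first half is fine: the dense class via Cuny--Weber, the reduction to dyadic scales using $S_{g,K_{i+1}}/S_{g,K_{i}}=O(1)$, the tiled global bound $\sum_{j\in\Z}(M_{g,K_{i}}\fff(j))^{\nnn_{i}}\leq 2\|\fff\|_{\ell_{1}}\bigl(C_{g,max}\|\fff\|_{\ell_{1}}/K_{i}\bigr)^{\nnn_{i}-1}$, and the per-scale Markov estimate are all correct. The gap is in what these sum to, and in the transference. The scales for which the geometric decay is unavailable are those with $K_{i}\lesssim C_{g,max}\|\fff\|_{\ell_{1}}/\lambda$; for fixed $\fff$ and $\lambda$ these are indeed finitely many, but their \emph{number} is about $\log_{2}\bigl(C_{g,max}\|\fff\|_{\ell_{1}}/\lambda\bigr)$, and each contributes $\|\fff\|_{\ell_{1}}/\lambda$ via the trivial bound. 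Writing $t=\|\fff\|_{\ell_{1}}/\lambda$, your $\Psi$ is therefore of order $t(1+\log^{+}t)$, not $t$: it is not a weak $(1,1)$ inequality. That is harmless for continuity in measure at $0$ on $\ell^{1}(\Z)$, but it is fatal for the Calder\'on transference step. Transference applies the discrete inequality to the orbit restrictions $\fff_{x}(n)=|f|(\tttt^{n}x)$, $0\leq n\leq L+N$, whose $\ell^{1}$ norms are about $(L+N)\|f\|_{L^{1}(\mu)}$ and tend to infinity with $L$; after integrating in $x$ and dividing by $L$ one needs $\sup_{s\geq 1}\Psi(s)/s<\oo$, i.e.\ at most linear growth of $\Psi$ at \emph{infinity}, for the estimate to survive the limit $L\to\oo$. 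With $\Psi(s)\sim s\log s$ the transferred bound diverges like $\log L$. So your sentence ``letting the length of the segments tend to infinity shows that $f\mapsto\sup_{K}\cam_{g,K}|f|$ is continuous in measure at $0$ on $L^{1}(X)$'' does not follow: what transfers is the behavior of $\Psi$ at large argument, not at small argument, and at large argument your bound is superlinear.

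The missing idea is precisely what the paper's proof of Claim~\ref{*twol*} supplies: do not discard the localization in Lemma~\ref{*lemlocmax*} by replacing window averages with $\|\fff\|_{\ell_{1}}/K_{i}$. Instead, at each scale $l>4$ split $\Z$ into dyadic blocks and call a block heavy if its (doubled) window average exceeds $1/(100\,C_{g,max})$, light otherwise, cf.\ \eqref{*14*a}. On light blocks Lemma~\ref{*lemlocmax*} gives the factor $(1/100)^{\nnn-1}\lesssim 1/l^{2}$ times the \emph{local} mass, so summing over blocks and over $l$ stays linear in $\|\fff\|_{\ell_{1}}$. On heavy blocks the maximal function is not estimated at all: heaviness itself forces $\#(I\cap\Z)\lesssim C_{g,max}\sum_{j\in I}\fff(j)$, so a Vitali-type covering argument (overlap multiplicity $2$) bounds the total length of all heavy blocks, over all scales simultaneously, by $800\,C_{g,max}\|\fff\|_{\ell_{1}}$. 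Together these give a genuine weak $(1,1)$ bound on $\Z$, linear in $\|\fff\|_{\ell_{1}}$, which is exactly the form that transference can use. Your scheme needs this heavy/light dichotomy (or an equivalent device) in place of the step ``bound each window average by $\|\fff\|_{1}/K_{i}$ and sum the resulting global bounds over scales.''
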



\begin{proof}

By Theorem 2.5 and Remark 2.6 of \cite{[CW]} we know that 
$\ooo$ and $\OOO$ are good weights for the pointwise ergodic
theorem in $L^{p}$ for $p>1$. This means that we have a dense
set of functions in $L^{1}$ for which the pointwise
ergodic theorem holds. In Theorem 2.5  of \cite{[CW]} it is not stated 
explicitely that the limit function
of the averages $\cam_{g,K} f$ is $\int_{X}fd\mmm$, but from the proof
of this theorem it is clear that $\cam_{g,K} f$ not only converges a.e., but its
limit is indeed $\int_{X}fd\mmm$ (at least for $f\in L^{\oo}(\mmm)$).
Indeed, from (2.2) in  \cite{[CW]} it follows that $\cam_{g,K} f$ can be 
written as the sum of an ordinary Birkhoff-average of $f$ and an error term which
tends to zero as $K\to\oo$.

Hence 
by standard application of Banach's principle (see for example \cite{[P]} p. 91)
the following weak $L^1$-maximal inequality proves Theorem \ref{*thdwe*}.
\begin{proposition}\label{*propwmaxmu*}
There exists a constant $C_{max}$  such that for every ergodic dynamical
system $(X,\caa,\mmm,\tttt)$ for every $f\in L^{1}(\mmm)$ and $\lll\geq 0$
\begin{equation}\label{*dwe11*a}
\mmm\{ x:\sup_{K\geq 1}\cam_{g,K} f(x)>\lll \}\leq C_{max}\frac{||f||_{1}}{\lll}.
\end{equation}
\end{proposition}
\end{proof}


\begin{proof}[Proof of Proposition \ref{*propwmaxmu*}.]
By standard transference arguments, see for example \cite{[RW]} Chapter III,
it is  sufficient to establish a corresponding weak maximal inequality on the integers with $\lll=1$ for nonnegative functions with compact support.
Hence, this proof will be completed by Proposition \ref{*propmaxi*}
below.
\end{proof}

Thus we need to state and prove the following maximal inequality:
\begin{proposition}\label{*propmaxi*}
There exists a constant $C_{max}$
 such that for every $\fff:\Z\to [0,\oo)$ with compact support
 $$\# \{ j:\sup_{K\in \N} M_{g,K} \fff(j)>1 \}\leq C_{max}||\fff||_{\ell _{1}}.$$
\end{proposition}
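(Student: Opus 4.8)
The plan is to derive this single weak $(1,1)$ bound for the maximal operator $\sup_{K}M_{g,K}$ from the fixed-scale estimate of Lemma~\ref{*lemlocmax*}; the essential difficulty is that the fixed-scale bounds \emph{cannot} simply be summed over all $K$. First I would reduce the supremum to dyadic scales. Writing $\nnn_{K}=\lf\log\log K\rf$ and using \eqref{*sumg*} to get $S_{g,2K}/S_{g,K}\to 2$, there is a constant $C_{0}$ with $S_{g,2K}\le C_{0}S_{g,K}$ for all $K$; since $\sum_{n\le K}g(n)\fff(j+n)\le\sum_{n\le 2^{s+1}}g(n)\fff(j+n)$ and $S_{g,K}\ge S_{g,2^{s}}$ whenever $2^{s}\le K\le 2^{s+1}$, this gives $M_{g,K}\fff(j)\le C_{0}\,M_{g,2^{s+1}}\fff(j)$. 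Hence it suffices to bound $\#\{j:\sup_{s\ge 4}M_{g,2^{s}}\fff(j)>1/C_{0}\}$. The finitely many scales $K<16$ are harmless, because a single scale already satisfies the trivial strong bound $\sum_{j}M_{g,K}\fff(j)=||\fff||_{\ell_{1}}$, so $\#\{M_{g,K}\fff>1\}\le||\fff||_{\ell_{1}}$ for each such $K$.

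Fix a threshold $\delta>0$ to be chosen small, and fix a dyadic scale $K=2^{s}\ge 16$. Tiling $\Z$ into consecutive blocks $B_{m}=[mK+1,(m+1)K]$ and applying Lemma~\ref{*lemlocmax*} with $k=mK$, Chebyshev applied to the $\nnn_{K}$-th powers shows that the number of $j\in B_{m}$ with $M_{g,K}\fff(j)>1/C_{0}$ is at most
$$C_{0}^{\nnn_{K}}\,W_{m}\Big(\frac{C_{g,max}}{K}W_{m}\Big)^{\nnn_{K}-1},\qquad W_{m}:=\sum_{j=2}^{2K}\fff(mK+j).$$
I would then split the blocks into \emph{low-density} ones, where $W_{m}\le\delta K$, and \emph{high-density} ones, where $W_{m}>\delta K$. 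For a low-density block the displayed quantity is at most $C_{0}W_{m}\theta^{\,\nnn_{K}-1}$ with $\theta:=C_{0}C_{g,max}\delta$; since each point of $\Z$ lies in at most two of the windows $[mK+2,mK+2K]$ we have $\sum_{m}W_{m}\le 2||\fff||_{\ell_{1}}$, so summing over $m$ and then over the dyadic scales bounds the low-density contribution by $2C_{0}||\fff||_{\ell_{1}}\sum_{s\ge 4}\theta^{\,\nnn_{2^{s}}-1}$.

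The convergence of this last series is the crux of the whole argument, and it is precisely what forces the passage to dyadic scales together with a quantitative smallness of $\delta$. Indeed, since $\nnn_{2^{s}}\ge\log\log 2^{s}-1$ and $\theta<1$,
$$\theta^{\,\nnn_{2^{s}}}\le\theta^{\log\log 2^{s}-1}=\theta^{-1}(s\log 2)^{\log\theta},$$
so the series converges exactly when $\log\theta<-1$, i.e.\ when $\theta<1/e$; I therefore choose $\delta<1/(eC_{0}C_{g,max})$. (Had one instead summed the fixed-scale bound over \emph{all} integer scales, the analogue $\sum_{K}(\log K)^{\log\theta}$ would diverge, which is why the dyadic reduction is indispensable: the exponent $\nnn_{K}\sim\log\log K$ grows far too slowly to beat the number of scales unless they are first thinned out.)

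It remains to treat the high-density blocks, where the point is that the bad set there is scale-independent. If $j\in B_{m}$ with $W_{m}>\delta K$, then $\sum_{|i|\le 2K}\fff(j+i)\ge W_{m}>\delta K$ because $[mK+2,mK+2K]\subseteq[j-2K,j+2K]$, so the centred Hardy--Littlewood maximal function $M^{*}\fff(j)=\sup_{r\ge 1}\frac{1}{2r+1}\sum_{i=-r}^{r}\fff(j+i)$ exceeds $\delta/5$. Consequently \emph{every} point in a high-density block, at \emph{every} dyadic scale, lies in the single fixed set $\{M^{*}\fff>\delta/5\}$, whose cardinality is at most $(5C_{HL}/\delta)||\fff||_{\ell_{1}}$ by the classical weak $(1,1)$ inequality for $M^{*}$ on $\Z$. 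Adding the low-density and high-density contributions together with the bound for $K<16$ yields the assertion, with $C_{max}$ depending only on $C_{0}$, $C_{g,max}$, $C_{HL}$ and the fixed choice of $\delta$.
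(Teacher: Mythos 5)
Your proof is correct and takes essentially the same route as the paper's: a reduction to dyadic scales via a doubling constant, a tiling into blocks split by a density threshold, Lemma~\ref{*lemlocmax*} plus Chebyshev with exponent $\lf \log\log 2^{s} \rf$ and the summability of $\theta^{\lf \log\log 2^{s} \rf}$ over $s$ for the sparse blocks, and a covering bound for the dense blocks. The only differences are cosmetic: you invoke the classical discrete Hardy--Littlewood weak $(1,1)$ inequality where the paper carries out the underlying bounded-overlap interval selection by hand, and you dispose of the finitely many small scales by the trivial identity $\sum_{j}M_{g,K}\fff(j)=||\fff||_{\ell_{1}}$ rather than by a covering argument.
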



Proposition \ref{*propmaxi*} can also be reduced further to the following Claim.
Set $M_{l}=M_{g, 2^{l}}$.

\begin{claim}\label{*twol*}
There exists a constant $C'_{max}$  such that for every $\fff:\Z\to [0,+\oo)$
with compact support
\begin{equation}\label{*dwe16*a}
\# \{ j:\sup_{l\in \N} M_{l} \fff(j)>1 \}\leq C_{max}'||\fff||_{\ell _{1}}.
\end{equation}
\end{claim}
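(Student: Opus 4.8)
The goal is to prove Claim~\ref{*twol*}, which reduces the supremum over all $K\in\N$ in Proposition~\ref{*propmaxi*} to a supremum over the dyadic scales $K=2^{l}$. The plan is to exploit the localized maximal inequality of Lemma~\ref{*lemlocmax*}, which controls the $\nnn$-th power sum of $M_{g,K}\fff$ over a window of length $K$, together with a standard Chebyshev/Markov-type argument at the critical exponent $\nnn=\lfloor\log\log 2^{l}\rfloor$. Because the weights grow only like $\log\log K$, the ``room'' we have is just enough that raising to the $\nnn$-th power and summing gives a bound whose main term is linear in $\|\fff\|_{\ell_1}$; this is the mechanism that makes the $L^{1}$ (rather than merely $L^{p}$, $p>1$) inequality work.

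First I would fix a dyadic scale $l$ and set $K=2^{l}$, $\nnn=\lfloor\log\log K\rfloor$. On each block of $K$ consecutive integers I would apply Lemma~\ref{*lemlocmax*} to estimate
$$
\#\{\,j\in\text{block}: M_{l}\fff(j)>1\,\}\leq \sum_{j\in\text{block}}\bigl(M_{l}\fff(j)\bigr)^{\nnn}
\leq \Big(\sum_{j}\fff\Big)\Big(\tfrac{C_{g,max}}{K}\sum_{j}\fff\Big)^{\nnn-1},
$$
where the sums on the right run over the doubled window $j=2,\dots,2K$ (translated to the block). The key observation is that the factor $\bigl(\tfrac{C_{g,max}}{K}\sum_{j}\fff\bigr)^{\nnn-1}$ is small whenever the local average of $\fff$ over the window is below the threshold $K/C_{g,max}$; in that regime the bound on the bad set is essentially linear in the local $\ell_1$-mass. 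I would then sum over a tiling of $\Z$ by such blocks to get a level-$l$ inequality, and finally take the supremum over $l$.

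The main obstacle is the passage from a single scale to the supremum over all dyadic $l$, since a naive union bound over the infinitely many scales would destroy the linear constant. The standard way around this is to separate two cases at each point $j$: either $\fff$ is ``locally dense'' near $j$ at scale $2^{l}$ (the local average exceeds a fixed fraction of the threshold), in which case $j$ can be charged directly to the $\ell_1$-mass of $\fff$ in a nearby window with a bounded overlap factor; or $\fff$ is ``locally sparse,'' in which case the factor $(\,\cdot\,)^{\nnn-1}$ decays geometrically in the deficit and the contributions are summable across scales. I would organize this by passing to the largest scale $l$ at which a given point is declared bad (a maximal-scale or covering-lemma argument), so that the bad points at that scale occupy disjoint or boundedly-overlapping windows whose total mass is controlled by $\|\fff\|_{\ell_1}$. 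Handling the geometric decay in $\nnn-1$ and verifying that the overlap multiplicities stay bounded uniformly in $l$ is where the real care is needed; once that is arranged, summing over scales produces the single constant $C'_{max}$ and completes the proof of \eqref{*dwe16*a}.
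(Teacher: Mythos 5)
Your proposal is correct and follows essentially the same route as the paper's proof: Chebyshev at the exponent $\nnn=\lfloor\log\log 2^{l}\rfloor$ applied to Lemma \ref{*lemlocmax*}, a dense/sparse dichotomy on dyadic blocks at each scale, a bounded-overlap covering argument charging the dense blocks to $||\fff||_{\ell_{1}}$, and summation over $l$ of the sparse contributions. The two details you leave open are exactly the ones the paper settles: the dichotomy threshold must be a fraction smaller than $1/e$ of the natural one (the paper takes $\frac{1}{100\,C_{g,max}}$, so the sparse factor $(1/100)^{\nnn-1}\leq 100^{2}\, l^{-\log 100}$ is summable in $l$ --- the decay across scales is polynomial in $l$, not geometric, so the size of this constant matters), and the scales $2^{l}<16$ need a separate trivial covering bound since Lemma \ref{*lemlocmax*} assumes $K\geq 16$.
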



\begin{proof}[Proof of Proposition \ref{*propmaxi*} based on Claim \ref{*twol*}.]
Given $K\in \N$ choose $l_{K}\in \N$ such that $2^{l_{K}-1}<K\leq 2^{l_{K}}$.
By \eqref{HW*1}, or \eqref{HW*2} 
there exists a constant $C_{R}>0$ not depending on $K$
 such that $S_{g,2^{l_{K}}} \leq  C_{R}S_{g,K}.$
 We have 
 $$1< M_{g,K}\fff(j)=\frac{1}{S_{g,K}}\sum_{j=1}^{K}g(n)\fff(j+n)\leq$$
 $$\frac{S_{g,2^{l_{K}}}}{S_{g,K}}\cdot \frac{1}{S_{g,2^{l_{K}}}}
 \sum_{n=1}^{2^{l_{K}}}g(n)\fff(j+n)\leq C_{R}
 M_{g,2^{l_{K}}}\fff(j).$$
 Hence, $1<M_{g,K}\fff(j)$ implies $\frac{1}{C_{R}}<M_{g,2^{l_{K}}}\fff(j)=M_{l_{K}}\fff(j).$
 
 For any $\tfff: \Z\to [0,+\oo)$ with compact support
 taking $\fff=C_{R}\tfff$ by Claim \ref{*twol*} we obtain
 $$\# \{ j:\sup_{K\in\N}M_{g,K}\tfff(j)>1 \}\leq
 \# \{ j:\sup_{l\in\N} M_{l}\fff(j)>1 \}\leq
 $$
 $$C_{max}'||\fff||_{\ell _{1}}=
 C_{max}'C_{R}||\tfff||_{\ell _{1}}.$$
\end{proof}

\begin{proof}[Proof of Claim \ref{*twol*}.] 
If $1\leq l\leq 4$ then consider the set
$E_{l}=\{ j: M_{l}\fff(j)>1 \}$
and the system of intervals
$\cai_{l}=\{ [j+1,j+2^{l}]\cap \Z: j\in E_{l} \}$.
Then $E_{l}+1\sse \cup_{I\in \cai_{l}}I$ and hence
$\# E_{l}\leq \# \cup_{I\in\cai_{l}}I.$
We can select a subsystem $\cai_{l}'\sse \cai_{l}$
 such that no point of $\Z$ is covered by more than two intervals
 belonging to $\cai_{l}'$ and $\cup_{I\in\cai_{l}'}I=\cup_{I\in\cai_{l}}I.$
 
Suppose $I=[j+1,j+2^{l}]\cap\Z \in \cai_{l}'\sse \cai_{l}$.
Then $M_{l} \fff(j)>1$ implies that 
$$1<\frac{1}{S_{g,2^l}}\sum_{n=1}^{2^{l}}g(n)\fff(j+n),$$
that is
$$S_{g,2^l}\leq \sum_{n=1}^{2^{l}}g(n)\fff(j+n)=\sum_{k\in I}g(k-j)\fff(k).$$
Thus
$$1\leq \frac{S_{g,2^l}}{\max_{k\leq 2^{l}}g(k)}\leq \sum_{k\in I}\fff(k).$$

If $l\leq 4$ then we have $\# I/16\leq 1 \leq \sum_{k\in I}\fff(k)$.
Since 
no point is covered by more than two intervals $I\in \cai_{l}'$, that is,
$\sum_{I\in\cai_{l}'}\chi_{I}(j)\leq 2$, ($j\in \Z$) we obtain that
for $l\leq 4$
$$\# E_{l}\leq \# \cup_{I\in \cai_{l}'}I\leq 32 ||\fff ||_{\ell _{1}}$$
and hence
\begin{equation}\label{*dwe20*a}
\# \{ j:\sup_{1\leq l\leq 4 }M_{l}\fff(j)>1 \}\leq 128 ||\fff ||_{\ell _{1}}.
\end{equation}

Next suppose that $l>4.$
We consider the dyadic intervals $(r2^{l},(r+1)2^{l}]\cap \Z$, $r\in \Z$.
We say that $r\in R_{l,+}$ if
\begin{equation}\label{*14*a}
\frac{1}{2^{l}}\sum_{j=r2^{l}+1}^{r2^{l}+2\cdot 2^{l}}\fff(j)>\frac{1}{100 \cdot C_{g,max}}.
\end{equation}
Otherwise, if $r\not \in R_{l,+}$ we say that $r\in R_{l,-}$.

For $r\in R_{l,-}$ we use Lemma \ref{*lemlocmax*} and the negation
of \eqref{*14*a}
to deduce that for $l>4$
\begin{equation}\label{*dwe21*a}
\sum_{j=1}^{2^{l}}(M_{l}\fff(r2^{l}+j))^{\lf \log\log 2^{l} \rf}
< \Big (\sum_{j=2}^{2\cdot 2^{l}}\fff(r2^{l}+j)\Big )
\cdot \Big (\frac{1}{100}\Big )^{\lf \log\log 2^{l} \rf-1}\leq
\end{equation}
$$
 100^{2}\Big (\sum_{j=2}^{2\cdot 2^{l}}\fff(r2^{l}+j)\Big )
\cdot \Big (\frac{1}{100}\Big )^{ \log\log 2^{l} }\leq
$$
$$
 100^{2}\Big (\sum_{j=2}^{2\cdot 2^{l}}\fff(r2^{l}+j)\Big )
\cdot \exp(-(\log 100)\cdot \log\log 2^{l})\leq $$ $$
 100^{2}\Big (\sum_{j=2}^{2\cdot 2^{l}}\fff(r2^{l}+j)\Big )
\cdot \frac{6}{l^{2}}, \text{ where we used that}
$$
 $4.61 \geq \log 100\geq 4.60517$ and $\log\log 2>-0.37$
implies that $$\exp(-(\log 100)\cdot \log\log 2^{l})=\exp(-(\log 100)((\log l) +\log\log 2))=$$ $$\exp(-(\log 100)\log\log 2)\cdot \exp(-(\log 100)\log l)<\frac{6}{l^{2}}.$$

Set $\cam^{*}_{l}=\{ j:M_{l}\fff(j)>1 \}$
and $\cam^{*}=\cup_{l}\cam_{l}^{*}.$

If $r\in R_{l,-}$ then by \eqref{*dwe21*a}
$$\# (\cam_{l}^{*}\cap (r2^{l},(r+1)2^{l}])\leq \sum_{j=1}^{2^{l}}
(M_{l}\fff(r2^{l}+j))^{\lf \log\log 2^{l} \rf}\leq$$
$$  6\cdot 100^{2}\cdot \frac{1}{l^{2}}
\Big (\sum_{j=2}^{2\cdot 2^{l}}\fff(r2^{l}+j)\Big ).$$
Hence
$$\# (\cam_{l}^{*}\cap \bigcup_{r\in R_{l,-}}(r2^{l},(r+1)2^{l}])\leq$$
$$12\cdot 100^{2}\frac{1}{l^{2}}||\fff_{\ell _{1}}||$$
and
\begin{equation}\label{*dwe23*a}
\#\Big ( \bigcup_{l}(\cam^{*}_{l}\cap \cup_{r\in R_{l,-}}(r2^{l},(r+1)2^{l}]) \Big )\leq 
12\cdot 100^{2} \frac{\pi^{2}}{6}||\fff_{\ell _{1}}||.
\end{equation}
On the other hand,
\begin{equation}\label{*26b***}
\cup_{l>4}\cup_{r\in R_{l,+}}(r2^{l},(r+1)2^{l}]\cap \Z\sse
\cup_{l>4}\cup_{r\in R_{l,+}}[r2^{l},(r+2)2^{l}]\cap \Z
.
\end{equation}
We can again select a subsystem $\cai_{+}^{*}$ of the intervals $\cai_{+}=
\{ [r2^{l},(r+2)2^{l}]:l>4,\  r\in R_{l,+} \}$
 such that 
 \begin{equation}\label{*11***a}
\sum_{I\in\cai_{+}^{*}}\chi_{I}(j)\leq 2\text{ for all }j\in \Z
 \text{ and }\cup_{I\in \cai_{+}}I=\cup_{I\in \cai_{+}^{*}}I.
  \end{equation}
From \eqref{*14*a} it follows that 
if
$[r2^{l},(r+2)2^{l}]=I\in \cai_{+}^{*}$ then
$$
C_{g,max}\cdot 400 \sum_{j\in I}\fff(j)> 4\cdot 2^{l}>\#(I\cap \Z).
$$
Thus, by \eqref{*11***a}
$$\# (\cup_{I\in \cai_{+}}I\cap \Z)=
\# (\cup_{I\in \cai_{+}^{*}}I\cap \Z)<C_{g,max}\cdot  800||\fff||_{\ell _{1}}.$$
Hence, by \eqref{*26b***}
$$\# \bigg (\cup_{l>4}\cup_{r\in R_{l,+}}(r2^{l},(r+1)2^{l}]\cap\Z\bigg ) \leq C_{g,max}\cdot 800||\fff||_{\ell _{1}}.$$
From this, \eqref{*dwe20*a} and \eqref{*dwe23*a} it follows that
$$\# \cam^{*}\leq (128+12\cdot 100^{2}\frac{\pi^{2}}{6}+800C_{g,max})
||\fff||_{\ell _{1}}=C_{max}'||\fff||_{\ell _{1}}.$$
\end{proof}



\begin{thebibliography}{99}


\bibitem{[CW]} C. Cuny and M. Weber, {\it Ergodic theorems with arithmetical weights}, Preprint:\verb!https://arxiv.org/abs/1412.7640!.

\bibitem{[Del]} H. Delange, {\it Sur des formules de Atle Selberg,} Acta Arith. {\bf 19}, 105-146, (1971).

\bibitem{[EAKLR]} E. H. El Abdalaoui, J. Kułaga-Przymus,   M. Lemańczyk and T. De La Rue, {\it The Chowla and the Sarnak
conjectures from ergodic theory point of view}, Preprint:\verb!https://arxiv.org/abs/1410.1673!. 
 





%

\bibitem{[Ha]} G. Hal\'asz, {\it On the distribution of additive and the mean values of multiplicative arithmetic
functions}, Studia Sci. Math. Hungar., {\bf 6}, 211-233,  (1971).

\bibitem{[HW]} G.H. Hardy and E. M. Wright,  {\it An introduction to the theory of numbers}. Sixth edition. Oxford University Press, Oxford, (2008). 


\bibitem{[Nor]} K. K. Norton,  {\it  On the number of restricted prime factors of an integer. I.} Illinois J. Math. {\bf 20}, no. 4, 681–705, (1976).

\bibitem{[P]} {K. Petersen,} {\it
Ergodic Theory},
Cambridge Studies in Advanced Mathematics 2,
Cambridge University Press, (1981).


\bibitem{[RW]} J. M. Rosenblatt and M. Wierdl, {\it Pointwise ergodic theorems via harmonic analysis.} Ergodic theory and its connections with harmonic analysis (Alexandria, 1993), 3–151, London Math. Soc. Lecture Note Ser., 205, Cambridge Univ. Press, Cambridge, (1995).





\end{thebibliography}
\end{document}